\def\id{\mbox{id}}
\def\inj{\mbox{inj}}
\def\R{\mathbb{R}}
\def\pd#1#2{\frac{\partial #1}{\partial #2}}
\def\vv<#1>{\langle#1\rangle}
\def\XXint#1#2{\setbox0=\hbox{$#1{#2}{\int}$}{#2}\kern-.5\wd0 }
\def\XXint#1#2#3{{\setbox0=\hbox{$#1{#2#3}{\int}$}
     \vcenter{\hbox{$#2#3$}}\kern-.5\wd0}}
\def\vv<#1>{\langle#1\rangle}
\newtheorem{theorem}{Theorem}[section]
\theoremstyle{definition}
\theoremstyle{remark}
\newtheorem{remark}{Remark}[section]
\numberwithin{equation}{section}
\begin{document}
\title{Negativity of Perelman's Li-Yau-Hamilton type expression }
\author{Chengjie Yu}
\address{Department of Mathematics, Shantou University, Shantou, Guangdong, P.R.China}
\email{cjyu@stu.edu.cn}
\maketitle
\markboth{Negativity of $v$}{Chengjie Yu}
\begin{abstract}
In \cite{CYT}, Chau-Tam-Yu proved the non-positivity of Perelman's new Li-Yau-Hamilton type expression $v$ on
noncompact manifolds. In this article, we further prove that $v$ is negative if the Ricci flow is not end up with an Euclidean space.
\end{abstract}

Let $M^n$ be a smooth manifold of dimension $n$ and let $g(t)$ with
$t\in [0,T]$ be a complete solution to the Ricci flow
\begin{equation}
\pd{g_{ij}}{t}=-2R_{ij}
\end{equation}
on $M^n$. We assume that $g(t)$ satisfies Shi's estimate (Ref. Shi \cite{Shi-Deforming metrics})
\begin{equation}\label{eqn-shi-estimate}
\|\nabla^k Rm\|^2\leq \frac{C_k}{t^{k}}
\end{equation}
all over $M\times [0,T]$, for any nonnegative integer $k$, where
$C_k$ is a positive constant depending on $k$. When $M$ is a compact
manifold, this assumption is superfluous since any solution to the
Ricci flow on a compact manifold will automatically satisfy Shi's
estimate (\ref{eqn-shi-estimate}) by Hamilton's work on Ricci flow
on compact manifolds. By the uniqueness result of Chen-Zhu \cite{Chen-Zhu}, such a solution
to the Ricci flow is uniquely determined by its initial metric $g(0)$.

Let $\Box$ and $\Box^*$ be the heat operator and conjugate heat operator along the $g(t)$ respectively. That is,
\begin{equation}
\Box u=\pd{u}{t}-\Delta u\ \mbox{and}
\end{equation}
\begin{equation}
\Box^* u=-\pd{u}{t}-\Delta u+Ru.
\end{equation}
Let $u$ be a fundamental solution of the conjugate heat equation with $u(x,T)=\delta_p$. Let $f$ be a function
on $M\times [0,T)$ such that
\begin{equation}
u(x,t)=(4\pi(T-t))^{-\frac{n}{2}}e^{-f}.
\end{equation}
The new Li-Yau-Hamilton type expression $v$ discovered by Perelman \cite{P1} is
\begin{equation}
v=[(T-t)(2\Delta f-\|\nabla f\|^2+R)+f-n]u.
\end{equation}

The following non-positivity of $v$ was obtained by Perelman \cite{P1} on compact manifolds and by Chau-Tam-Yu \cite{CYT}
on noncompact manifolds.

\begin{theorem}[Non-positivity of $v$] $v\leq 0$ all over $M\times [0,T)$.
\end{theorem}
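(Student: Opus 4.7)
The plan is to reduce the statement to Perelman's pointwise identity for $\Box^* v$ and then apply a maximum principle for the conjugate heat equation. Starting from $\Box^* u = 0$ together with the evolution $\pd{f}{t} = -\Delta f + \|\nabla f\|^2 - R + \frac{n}{2(T-t)}$ (read off by differentiating $u=(4\pi(T-t))^{-n/2}e^{-f}$), a direct commutator-and-Bochner computation yields
\begin{equation*}
\Box^* v = -2(T-t)\left|R_{ij} + \nabla_i\nabla_j f - \frac{g_{ij}}{2(T-t)}\right|^2 u \leq 0.
\end{equation*}
Thus $v$ is a subsolution of the conjugate heat equation in the sense appropriate for the backward-in-time maximum principle.

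Next I would establish the boundary condition at $t = T$. Since $u$ concentrates at $p$, the short-time asymptotics give $f(x,t) \sim d(x,p)^2/(4(T-t))$, and one checks that the $\mathcal{O}(1/(T-t))$ terms in the bracketed factor of $v$ cancel so that $v(\cdot,t) \to 0$ pointwise away from $p$ (and weakly everywhere). On a compact $M$ this, combined with $\Box^* v \leq 0$, immediately yields $v \leq 0$ by the backward maximum principle, recovering Perelman's original argument.

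The main obstacle is the noncompact case, where neither the maximum principle nor the limit $t \to T^-$ is automatic, and one must control $v$ at spatial infinity. Using Shi's estimate (\ref{eqn-shi-estimate}), I would derive Gaussian upper bounds on $u$ of the form
\begin{equation*}
u(x,t) \leq C(T-t)^{-n/2}\, e^{-d(x,p)^2/(C(T-t))},
\end{equation*}
together with gradient and Hessian estimates $\|\nabla f\|^2 + |\Delta f| \leq C(1 + d(x,p)^2)/(T-t)^2$, so that the polynomial prefactor in $v$ is dominated by the exponential decay of $u$. One then multiplies Perelman's identity by a spatial cutoff $\eta_R$ supported on $B(p,2R)$, integrates, controls the cutoff error terms, and lets $R \to \infty$ to deduce the pointwise inequality. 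The delicate point, and the heart of the argument in \cite{CYT}, is obtaining the bounds on $\nabla f$ and $\nabla^2 f$ uniformly up to $t = T$ with the correct dependence on $(T-t)$; crude estimates blow up too fast as $t \to T^-$, so one must exploit the specific structure of the heat-kernel asymptotics afforded by Shi's estimate.
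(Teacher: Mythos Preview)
The paper does not supply its own proof of this theorem; it merely records the result and attributes the compact case to Perelman \cite{P1} and the noncompact case to Chau--Tam--Yu \cite{CYT}, then immediately moves on to the proof of Theorem~\ref{thm-negativity-v}, which uses $v\le 0$ as an input.  So there is, strictly speaking, no ``paper's own proof'' to compare against.

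That said, your sketch is precisely the route taken in the cited references.  Perelman's identity $\Box^* v=-2(T-t)\bigl|R_{ij}+f_{ij}-\tfrac{1}{2(T-t)}g_{ij}\bigr|^2 u\le 0$ is the engine, and the compact case follows by integrating against a nonnegative solution $h$ of $\Box h=0$ and showing $\int_M hv\,dV_t\to 0$ as $t\to T^-$ (this is the usual way the boundary condition enters, rather than a direct pointwise maximum principle).  For the noncompact case you correctly identify the crux: one needs Gaussian upper bounds on $u$ and polynomial (in $d(p,x)$ and $(T-t)^{-1}$) control on $\nabla f$, $\nabla^2 f$ so that the cutoff errors vanish as $R\to\infty$; this is exactly what \cite{CYT} establishes under the bounded-curvature/Shi-estimate hypothesis.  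Your proposal is therefore consistent with, and essentially a summary of, the arguments the paper is quoting.
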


Using strong maximum principle, we can further get the following negativity of $v$.
\begin{theorem}[Negativity of $v$]\label{thm-negativity-v}If $(M^n,g(T))$ is not the Euclidean space $\R^n$, $v<0$ on $M\times [0,T)$.
\end{theorem}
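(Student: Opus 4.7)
The strategy is proof by contradiction using the parabolic strong maximum principle applied to Perelman's evolution equation
\[
\Box^* v = -2(T-t)\left|R_{ij}+\nabla_i\nabla_j f - \frac{g_{ij}}{2(T-t)}\right|^2 u,
\]
which was the engine behind the non-positivity theorem. Suppose for contradiction that $v(x_0,t_0)=0$ at some interior point $(x_0,t_0)\in M\times[0,T)$. Since $v\leq 0$ by the previous theorem, this is an interior maximum of $v$, and $t_0<T$. Reparametrising by $\tau=T-t$ converts $\Box^*$ into the forward-parabolic operator $\partial_\tau-\Delta+R$, so the identity above gives $\partial_\tau v \leq \Delta v - Rv$. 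I would then invoke the parabolic strong maximum principle to conclude $v\equiv 0$ on $M\times[t_0,T)$. In the noncompact setting this step requires controlled growth of $v$ at spatial infinity, which I expect to follow from Shi's estimates \eqref{eqn-shi-estimate} together with the Gaussian-type bounds on the heat kernel $u$ already exploited in \cite{CYT}.

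Once $v\equiv 0$ on the slab $M\times[t_0,T)$, positivity of $u$ forces the right-hand side of Perelman's identity to vanish, giving the gradient shrinking soliton equation
\[
R_{ij}+\nabla_i\nabla_j f = \frac{g_{ij}}{2(T-t)}.
\]
Tracing yields $R+\Delta f=n/(2(T-t))$, and substituting this into $(T-t)(2\Delta f-|\nabla f|^2+R)+f-n=v/u=0$ produces the companion identity $R+|\nabla f|^2=f/(T-t)$. Thus on this slab the flow is self-similar and $f$ is precisely the soliton potential induced by the conjugate heat kernel that concentrates at $p$ as $t\to T$.

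The main obstacle is the final classification step: upgrading these two soliton identities, combined with the fact that $u\to\delta_p$, to the conclusion that $(M,g(T))$ is isometric to flat $\mathbb{R}^n$. This is a rigidity statement of the same flavour as the fact that Perelman's reduced volume equals $1$ exclusively on the Gaussian shrinker. A concrete route is to work in geodesic coordinates at $p$: the identity $R+|\nabla f|^2=f/(T-t)$ together with the Hessian equation forces the level sets of $f$ to behave like rescaled Euclidean spheres and pins down their second fundamental forms, which with completeness identifies $(M,g(T))$ with $\mathbb{R}^n$. Alternatively one may quote a rigidity result for Perelman's reduced-volume or $\mathcal{W}$-entropy functional. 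This is where I expect the delicate work to lie, whereas the strong-maximum-principle step is largely routine modulo the noncompact technicalities.
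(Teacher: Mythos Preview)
Your setup through the soliton equation is correct and matches the paper: strong maximum principle propagates $v\equiv 0$ forward in $\tau=T-t$ to the whole slab $M\times[t_0,T)$, and Perelman's identity then yields $R_{ij}+\nabla_i\nabla_j f=\tfrac{g_{ij}}{2(T-t)}$ together with the companion $R+|\nabla f|^2=f/(T-t)$. The gap is exactly the rigidity step you yourself flag as ``the main obstacle,'' and your proposed routes do not close it. The two identities you derived hold on \emph{every} normalised complete gradient shrinking soliton---round spheres, cylinders, and many others---so no local analysis of level sets of $f$ can single out the Gaussian without a further global ingredient. The asymptotic condition $u\to\delta_p$ lives at $t=T$ and does not translate into a usable pointwise constraint at earlier times, and there is no ready-made $\mathcal W$- or reduced-volume rigidity statement one can simply quote in this noncompact generality.

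The paper supplies the missing global ingredient: the \emph{uniform} curvature bound on all of $[0,T]$ from Shi's estimate \eqref{eqn-shi-estimate}. Fixing $s=t_0$, one integrates the vector field $X(t)=\tfrac{T-s}{T-t}\nabla^s f(s)$ (linear growth, hence integrable) to diffeomorphisms $\psi_t$ and checks from the soliton equation that $\tilde g(t)=\tfrac{T-t}{T-s}\,\psi_t^*g(s)$ is a bounded-curvature Ricci flow with $\tilde g(s)=g(s)$. Chen--Zhu uniqueness then gives $g(t)=\tilde g(t)$, whence $\|Rm(g(s))\|=\tfrac{T-t}{T-s}\,(\psi_t^{-1})^*\|Rm(g(t))\|\leq \tfrac{C_0(T-t)}{T-s}\to 0$ as $t\to T^-$, so $g(s)$ is flat. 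To go from ``flat'' to ``isometric to $\R^n$'' one uses the Gaussian upper bound on $u$ from \cite{CYT} to see that $f(\cdot,s)$ attains a minimum at some $q$; then $\nabla^s f(s)$ vanishes at $q$, each $\psi_t$ fixes $q$, and the self-similarity gives $\inj_q(g(s))=\sqrt{\tfrac{T-s}{T-t}}\,\inj_q(g(t))\to\infty$.
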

\begin{proof} We proved it by contradictions. Suppose there is a $(y,s)\in M\times [0,T)$ such that $ v(y,s) = 0$.
Note that $v$ is a sub-solution to the conjugate heat equation and that $$v\leq 0$$ on $M\times [0,T)$. By the
usual strong maximum principle,  $$v(x,t)=0$$ for any $(x,t)\in M\times [s,T)$.

We come to get a contradiction. By the evolution equation of $v$ (Ref. Perelman \cite{P1}),
\begin{equation*}
R_{ij}+f_{ij}-\frac{1}{2(T-t)}g_{ij}=0
\end{equation*}
for any $t\in [s,T)$. Let $X(t)=\frac{T-s}{T-t}\nabla^sf(s)$ for $t\in [s,T)$. Note that, by
the last equation, and Shi's estimate (\ref{eqn-shi-estimate}), $f_{ij}(s)$ is bounded. So $\|X\|$ is
at most linear growth on $M\times [s,T)$. Hence, $X$ can be integrated up on $[s,T)$. Let $\psi_t$ with
$t\in [s,T)$ be the family of diffeomorphisms generated by $X$ ($\psi_s=\id$). Let
\begin{equation*}
\tilde g(t)=\frac{T-t}{T-s}\psi_t^{*}g(s).
\end{equation*}
Then
\begin{equation*}
\begin{split}
\tilde g'(t)=&-\frac{1}{T-s}\psi_t^{*}g(s)+\frac{T-t}{T-s}\psi_t^{*}\mathcal{L}_{X_t}g(s)\\
            =&-\frac{1}{T-s}\psi_t^{*}g(s)+\psi^*_{t}\mathcal{L}_{\nabla^sf(s)}g(s)\\
            =&-\frac{1}{T-s}\psi_t^{*}g(s)+2\psi^*_{t}f_{ij}(s)\\
            =&-2\psi^*_{t}R_{ij}(s)=-2R_{ij}(\tilde g(t)).
\end{split}
\end{equation*}
So $\tilde g$ is a complete solution to the Ricci flow on $[s,T)$ with $\tilde g(s)=g(s)$ and bounded curvatures. By uniqueness of result of Chen-Zhu \cite{Chen-Zhu},
\begin{equation}\label{eqn-soliton}
\frac{T-t}{T-s}\psi_t^{*}g(s)=\tilde g(t)=g(t),\ \mbox{and}
\end{equation}
\begin{equation*}
\|Rm(g(s))\|=\frac{T-t}{T-s}\big(\psi_t^{-1}\big)^{*}\|Rm(g(t))\|\leq \frac{C(T-t)}{T-s},
\end{equation*}
for any $t\in [s,T)$.

Letting $t\to T^-$, we get that $g(s)$ is flat.

Furthermore, by the Gaussian upper bound of fundamental solutions
(Theorem 5.1 in Chau-Tam-Yu \cite{CYT}),
\begin{equation*}
f(x,s)\geq c_1r^2_{0}(p,x)-C_2
\end{equation*}
for any $x\in M$. So, $f(s)$ achieves its minimum at some point $q$. Then,
\begin{equation*}
X_t(q)=\frac{T-s}{T-t}[\nabla^sf(s)](q)=0
\end{equation*}
for any $t\in [s,T)$ and $\psi_t(q)=q$ for any $t\in [s, T)$. By equation (\ref{eqn-soliton}),
\begin{equation}
\inj_q(g(s))=\sqrt{\frac{T-s}{T-t}}\cdot\inj_q(g(t)).
\end{equation}
Letting $t\to T^-$, we have that $\inj_q(g(s))=\infty$. Therefore, $(M,g(s))$ is the standard
Euclidean space and hence $(M, g(T))$ is the standard Euclidean space which is ruled out by
our assumption.
\end{proof}
\begin{remark}
\begin{itemize}
\item[(1)] The argument of the proof is not new. For example, it is basically contained in Chau-Tam-Yu \cite{CYT}.
\item[(2)] If we have the backward uniqueness of Ricci flow, we know that if $v$ is zero at some point
in $M\times [0,T)$, then $(M,g(t))$ is the trivial Ricci flow on $\R^n$.
\item[(3)] $v$ measures how far away the Ricci flow $g(t)$ is from the trivial Ricci flow on $\R^n$. So, it is
reasonable to get stronger estimates for $v$ in terms of geometry of $g(t)$.
\end{itemize}
\end{remark}
\vspace{0.5cm}
%\textbf{Acknowledgement.} This work is part of the author's PhD thesis at The Chinese University of Hong Kong.
%The author would like to thank his supervisor, Professor Luen-Fai Tam, for encouragement and guidance in
%the past five years.

\end{document}